\documentclass[A4, 11pt, reqno]{amsart}
\usepackage{tikz-cd}
\usetikzlibrary{arrows.meta,positioning,calc}
\newcommand{\ColumnSep}{3.6cm}
\newcommand{\RowSep}{3.2cm}
\newcommand{\LoopReach}{6.5mm}
\newcommand{\LoopFlare}{4.5mm}
\newcommand{\LoopMouth}{1.4mm}
\pgfdeclarelayer{arrows}
\pgfsetlayers{arrows,main}
\usepackage{quiver}
\usepackage[T1]{fontenc}
\usepackage{amsmath,amssymb,amsthm}
\usepackage[backend=biber]{biblatex} 
\usepackage{hyperref}
\theoremstyle{definition}

\newtheorem{theorem}{Theorem}[section]
\newtheorem{proposition}[theorem]{Proposition}

\newtheorem{definition}[theorem]{Definition}
\newtheorem{example}[theorem]{Example}

\newtheorem{question}[theorem]{Question}
\begin{document}
\title{Tensor Product Does Not Behave Additively on Delooping Level}
\author{Ryan Lam}
\address{School of Mathematics, Fry Building, Woodland Road, Bristol, BS8 1UG, United Kingdom}
\email{ryan.lam@bristol.ac.uk}

\begin{abstract}
    The Finitistic Dimension Conjecture is one of the most important homological conjectures in the representation theory of finite-dimensional algebras. Recently, G\'elinas \cite{GELINAS2022108052} proposed the notion of ``delooping level'' to study the Finitistic Dimension Conjecture. This article reports a misconception between the delooping level and the tensor product of algebras. Namely, for finte-dimensional algebras $A$ and $B$, it is not always true that $\operatorname{dell}(A\otimes_{\mathbf{k}}B) = \operatorname{dell}(A)+ \operatorname{dell}(B)$.
\end{abstract}
\maketitle
\tableofcontents
\section{Introduction}
We set up our notations first. Let $A$ be an Artin algebra, or more specifically, a finite-dimensional $\mathbf k$-algebra over an algebraically closed field $\mathbf k$. Unless otherwise specified, all modules are right modules. Denote $\operatorname{Mod} A$ to be the category of all right $A$-modules, and $\operatorname{mod} A$ to be the category of all finitely generated right $A$-modules. Also denote the finitely generated projectively stable right module category by $\underline{\bmod A}$. 

In 1960, Bass \cite{bass_finitistic_1960} investigated the following two invariants, introduced by Auslander and Buchsbaum in \cite{aus_busch}:
\begin{align*}
    \operatorname{findim} A &:= \sup\{\operatorname{pd}(M) : \operatorname{pd}(M)<\infty, M \in\operatorname{mod} A \} \\
    \operatorname{FinDim} A &:= \sup\{\operatorname{pd}(M) : \operatorname{pd}(M)<\infty, M \in\operatorname{Mod} A \}
\end{align*}
called the \textbf{little} and \textbf{big finitistic dimension}, and conjectured that they are equal and finite for all finite-dimensional algebras $A$. Soon, examples were found where they are different, for example by Huisgen-Zimmermann \cite{huisgen-zimmermann_homological_1992} and Smal{\o} \cite{Smal98}. But whether $\operatorname{findim} A < \infty$ or $ \operatorname {FinDim} A < \infty$ for an arbitrary algebra $A$ remains open, referred to as the little and big \textit{finitistic dimension conjecture}, respectively. Early development of the conjecture was documented in a survey paper by Huisgen-Zimmermann \cite{Huisgen1995}.

Recently, there has been a search for conditions that would imply the big finitistic dimension conjecture. G\'elinas  introduced ``delooping level'' in \cite{GELINAS2022108052}. Let $\Omega: \underline{\bmod A} \to \underline{\bmod A}$ be the syzygy functor. In this article, if we are taking a syzygy in $\operatorname{mod} A$, then we are always taking the kernel of the projective cover. In his paper, G\'elinas showed $\operatorname{dell}(A)\geq \operatorname{FinDim}(A^{op})$ for all algebras $A$. 

\begin{definition}
      The \textbf{delooping level} of an $A$-module $M$ is the least integer $n$ such that there exists an $A$-module $N$ for which $\Omega^n(M)$ is a stable summand of $\Omega^{n+1}(N)$. We write $\operatorname{dell}_A(M) = n$ in this case.

      The \textbf{delooping level} of an algebra $A$ is defined by 
    \[ \operatorname{dell}(A) := \sup_{M \text{ is a simple module}} \operatorname{dell}_A(M).\]
\end{definition}

There seems to be an understanding that delooping level and Finitistic dimension behave additively under tensor products, and this was commonly attributed to Rickard. For example, it is mentioned in an article \cite[p.217, l2]{barrios_delooping_2026} by Barrios, Lanzilotta and Mata. While the result for finitistic dimensions is true and, in fact, recorded by Smal{\o} \cite{Smal98}, we show that the result for delooping levels is false. There are two ways to interpret the phrase ``delooping level behaves additively on tensor products''; we prove both wrong in the following theorem.
\begin{theorem}
 There is algebra $A$ and simple $A$-modules $M, N$ such that
\begin{equation}\operatorname{dell}_A(M)+\operatorname{dell}_A(N) \neq \operatorname{dell}_{A\otimes_\mathbf{k} A}(M \otimes_\mathbf{k} N)
    \label{eq:count_1}
    \end{equation}
    In particular, this implies there are algebras $A$ and $B$ such that
    \[\operatorname{dell}(A\otimes_{\mathbf{k}}B) \neq \operatorname{dell}(A)+ \operatorname{dell}(B).\]
\label{thm:dell_count}
\end{theorem}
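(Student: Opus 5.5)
The plan is to exhibit an explicit small algebra $A$ and compute everything by hand. Over an algebraically closed field the simple $A\otimes_{\mathbf k}A$-modules are exactly the $M\otimes_{\mathbf k}N$ with $M,N$ simple $A$-modules. Moreover, the tensor product of the minimal projective resolutions of $M$ and $N$ is a (minimal) projective resolution of $M\otimes_{\mathbf k}N$. So the syzygies $\Omega^n_{A\otimes A}(M\otimes N)$ can be described explicitly from the resolutions of $M$ and $N$. For instance, $\Omega(M\otimes N)$ is the kernel of $P_M\otimes P_N\to M\otimes N$, which is the sum $\Omega M\otimes P_N+P_M\otimes\Omega N$ inside $P_M\otimes P_N$.

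First I will rule out the obvious candidates. If $A$ is self-injective, every delooping level is $0$. If $A$ is hereditary, for example $A=\mathbf k(1\to 2)$, then a non-projective simple $S$ has $\operatorname{dell}_A(S)=1$, because every syzygy is projective. Over $A\otimes A$ the module $S\otimes S$ has projective dimension $2$ and $\Omega(S\otimes S)$ is non-projective, so $\operatorname{dell}(S\otimes S)=2$ and additivity holds. These computations show that finite global dimension tends to give additivity. So the counterexample should come from a simple module of infinite projective dimension. I will therefore search among radical-square-zero algebras (or small monomial algebras), where syzygies of simples are semisimple or uniserial and hence easy to track.

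The key steps, in order, are as follows.
\begin{enumerate}
\item Fix a candidate quiver with relations. Compute the minimal resolutions of its simples and the value of $\operatorname{dell}_A$ on each simple. Lower bounds come from showing that $\Omega^{n}S$ is not a stable summand of any $\Omega^{n+1}X$. For this I will use that syzygies lie in radicals of projectives, so a candidate summand must embed in $\operatorname{rad}P$ with a specific top and socle.
\item Present $A\otimes A$ as the product quiver with commutativity relations together with the relations of each factor. Compute $\Omega^n(M\otimes N)$ for the chosen pair of simples via the tensored resolution.
\item Exhibit an explicit $X$ (or prove that no $X$ exists) with $\Omega^{n}(M\otimes N)$ a stable summand of $\Omega^{n+1}X$, for an $n$ different from $\operatorname{dell}_A(M)+\operatorname{dell}_A(N)$.
\end{enumerate}
The most plausible mechanism is a strict drop. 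The sum $\Omega M\otimes P_N+P_M\otimes\Omega N$ glues the two syzygies along $\Omega M\otimes\Omega N$. That glued module can itself be the syzygy of some $A\otimes A$-module $X$ not of tensor form, even though neither $M$ nor $N$ is "deloopable" at the corresponding level in $A$.

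For the ``in particular'' statement I will take $B=A$ and use that $\operatorname{dell}(A\otimes A)$ is the supremum over the simples $M\otimes N$. It suffices to arrange one of two things. Either $M=N$ attains $\operatorname{dell}(A)=d$ and some simple of $A\otimes A$ has delooping level $>2d$. Or every simple $M'\otimes N'$ has delooping level $<2d$; this requires checking all pairs of simples, which is feasible when $A$ has only two or three simples. Either way the first statement, applied to a maximizing pair, gives $\operatorname{dell}(A\otimes A)\neq 2\operatorname{dell}(A)$.

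The main obstacle is step 3 together with the lower bounds in step 1. Upper bounds only require writing down a module $X$. Lower bounds require showing that no module at all has the given module as a stable syzygy summand, over the larger and wilder algebra $A\otimes A$. I would first try to reduce this to a local criterion: a module $Y$ without projective summands is a summand of some $\Omega^{k}X$ only if it is a summand of a $k$-th syzygy, in particular torsionless. For small $k$ I can then test this by checking whether $Y$ embeds in a projective with the required cokernel being a $(k-1)$-st syzygy.
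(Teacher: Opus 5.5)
Your proposal is a search strategy rather than a proof, and the gap is genuine. The statement is existential, so its whole content is a specific algebra plus a verification of the relevant delooping levels, and you never name one.

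\textbf{The paper's example, and where your heuristics point the wrong way.} The paper uses the radical-square-zero algebra $A$ on vertices $1,2,3,4$, with a loop at $1$ and arrows $1\to 2\to 3$ and $4\to 2$. There $S_1,S_2,S_3$ are torsionless, and $\Omega S_4=S_2$ is a summand of $\Omega^2\binom{1}{2}=S_1\oplus S_2$. Hence $\operatorname{dell}(A)=\operatorname{dell}_A(S_4)=1$, and the claim is $\operatorname{dell}_{A\otimes_{\mathbf k}A}(S_4\otimes_{\mathbf k}S_4)>2$ (in fact it is $3$).
\begin{itemize}
\item This is a strict \emph{increase}, not the drop you consider most plausible.
\item The simple $S_4$ has finite projective dimension $2$, not infinite. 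What is exploited is $\operatorname{dell}_A(S_4)=1<2=\operatorname{pd}S_4$, with the delooping of $S_2$ supplied by the loop at vertex $1$.
\item An increase at a maximizing pair gives $\operatorname{dell}(A\otimes_{\mathbf k}A)>2\operatorname{dell}(A)$ at once. Your drop scenario would instead force you to control every simple of $A\otimes_{\mathbf k}A$.
\end{itemize}

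\textbf{The missing lower-bound method.} The more serious problem is the decisive step: proving $\operatorname{dell}>n$ over $A\otimes_{\mathbf k}A$, which quantifies over all modules $X$. You leave this without a workable method. Your proposed test (torsionlessness, then an embedding into a projective with cokernel a $(k-1)$-st syzygy) asks whether $Y$ is itself a $k$-th syzygy, and leaves open which embedding to test. The relevant question is whether $Y$ is a \emph{stable summand} of some $\Omega^kX$, and as stated your test is not a finite check.

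The missing idea is G\'elinas's criterion. It says $\operatorname{dell}(M)\le n$ if and only if $\Omega^nM$ is a stable retract of $\Omega^{n+1}\Sigma^{n+1}\Omega^nM$, where $\Sigma$ is computed as the cokernel of a left projective approximation. This replaces ``for all $X$'' by a single computable module.

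The paper then finishes with a support argument:
\begin{itemize}
\item $\Sigma^3\Omega^2(S_{44})$ is supported on $\{11,12,21,22\}$.
\item That set lies inside the successor-closed set $\{(i,j): i,j\le 3\}$, so all syzygies of $\Sigma^3\Omega^2(S_{44})$ stay supported there.
\item Meanwhile $\Omega^2(S_{44})$, which has no projective summands, is supported at $(4,3)$ and $(3,4)$, so no stable retraction can exist.
\end{itemize}

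Without a concrete algebra and these two tools, your steps 1--3 do not establish the theorem.
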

Motivated by Theorem \ref{thm:dell_count}, for arbitrary algebra $R$ we define
\[s(R) := \operatorname{dell}(R\otimes_\mathbf{k} R) -2\operatorname{dell}(R).\] 
One might ask whether there is a bound on $s(R)$. We answer this in the negative.
\begin{theorem}
    Let $A$ be the algebra in Theorem \ref{thm:dell_count}. If we inductively define a family of algebras $\{A_n\}_{n \geq 2}$ by
    \[A_2 = A, \qquad A_n = \begin{pmatrix}
    A_{n-1} & M \\ 0 &\mathbf{k}
\end{pmatrix}\]
where $M$ is a specified simple left $A_{n-1}$-module (see Section \ref{sec:gen}), then we have $s(A_n) \geq n-1$ for all $n \geq 2$.
\label{thm:dell_gen}
\end{theorem}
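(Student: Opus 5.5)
The plan is induction on $n$: at each step the one-point extension should add exactly one to the delooping level of a chosen simple $A_n$-module, while the corresponding simple over $A_n\otimes_\mathbf{k}A_n$ gains at least one more than that. Write $S_n$ for the simple $A_n$-module at the new vertex. Over $A_n=\begin{pmatrix} A_{n-1} & M\\ 0 & \mathbf{k}\end{pmatrix}$ its projective cover $P_n$ has radical $M$, viewed as a module supported on $A_{n-1}$. Hence $\Omega S_n\cong M$, and more generally $\Omega^{i+1}_{A_n}S_n\cong\Omega^i_{A_{n-1}}M$, because $\operatorname{mod}A_{n-1}$ sits inside $\operatorname{mod}A_n$ closed under projective covers. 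Theorem \ref{thm:dell_count} supplies the base case $n=2$: there $A$ has simples $M,N$ with $\operatorname{dell}(M)+\operatorname{dell}(N)<\operatorname{dell}_{A\otimes A}(M\otimes N)$. The inductive hypothesis I will carry is slightly stronger than $s(A_{n-1})\ge n-2$. It asks for a simple $T$ of $A_{n-1}$, the one used to define $M$ in the next step, with $\operatorname{dell}_{A_{n-1}}T=\operatorname{dell}(A_{n-1})$, and such that $\operatorname{dell}_{A_{n-1}^{\otimes2}}(T\otimes T)\ge 2\operatorname{dell}(A_{n-1})+n-2$.

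Step 1 is the upper bound on $\operatorname{dell}(A_n)$. The simples other than $S_n$ are simple $A_{n-1}$-modules. I will check that their syzygies, and any witness modules $N$, can be taken inside $\operatorname{mod}A_{n-1}$, so their delooping levels do not increase. For $S_n$, a witness at level $k$ for $M$ over $A_{n-1}$ gives a witness at level $k+1$ for $S_n$, because $\Omega^{k+1}S_n=\Omega^kM$. Hence $\operatorname{dell}(A_n)\le\operatorname{dell}(A_{n-1})+1$. I will choose $M$ as the dual of the simple $T$ of maximal delooping level, so that this bound is attained.

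Step 2 is the lower bound on $\operatorname{dell}(A_n\otimes A_n)$. I will use the simple $S_n\otimes S_n$. Over the tensor algebra its projective cover is $P_n\otimes P_n$, and the radical sits in
\[0\to M\otimes M\to \operatorname{rad}(P_n\otimes P_n)\to (M\otimes S_n)\oplus(S_n\otimes M)\to0,\]
with higher syzygies computed by the Künneth formula: $\Omega^{k}(X\otimes Y)$ is stably built from the $\Omega^iX\otimes\Omega^jY$ with $i+j=k$. The term $\Omega^{2}(S_n\otimes S_n)$ should have $\Omega_{A_{n-1}}(M)\otimes M$-type pieces, and ultimately $\Omega^{m+2}(S_n\otimes S_n)$ should contain $\Omega^m_{A_{n-1}^{\otimes2}}(M\otimes M)$ as a stable summand. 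This would give $\operatorname{dell}(S_n\otimes S_n)\ge\operatorname{dell}_{A_{n-1}^{\otimes 2}}(M\otimes M)+2$, up to identifying $M\otimes M$ with the relevant simple.

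Combining the two steps gives $s(A_n)\ge\bigl(2\operatorname{dell}(A_{n-1})+n-2+2\bigr)-2\bigl(\operatorname{dell}(A_{n-1})+1\bigr)=n-2$. That is one short of $n-1$. So the real gain has to come from the mixed terms $M\otimes S_n$: these have delooping level computed jointly, and I expect them to force one extra step. The plan is therefore to show the stronger statement $\operatorname{dell}(S_n\otimes S_n)\ge\operatorname{dell}(T\otimes T)+3$, or equivalently to track the strict inequality coming from the base example through the extension.

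The main obstacle is Step 2, the lower bound. Delooping level is a minimum over all witnesses $N$, so I must rule out every module whose syzygy stably contains the relevant $\Omega^k(S_n\otimes S_n)$. I would try to reduce this to $A_{n-1}^{\otimes 2}$ by applying the idempotent functor $e(-)$, where $e=e_{A_{n-1}}\otimes e_{A_{n-1}}$. This functor is exact, sends projectives to projectives, and commutes with syzygies because $e(A_n\otimes A_n)e=A_{n-1}\otimes A_{n-1}$. So a witness for $S_n\otimes S_n$ at level $k$ restricts to a witness for $e\,\Omega^k(S_n\otimes S_n)$, and that module contains $\Omega^{k-2}(M\otimes M)$ as a stable summand. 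Making this compatible with the extra $+1$ from the mixed terms, and hence obtaining $n-1$ rather than $n-2$, is the delicate point. I would check it first on $A_3$ by explicit computation.
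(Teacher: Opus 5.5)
Your proposal does not prove the statement, and what is missing is the theorem's main content. By your own count, Steps 1 and 2 only carry $s\ge n-2$ from $A_{n-1}$ to $A_n$ without increasing it. The extra $+1$ per step is stated only as an expectation (``the mixed terms \dots\ force one extra step''), with no mechanism behind it.

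The inductive framework also misreads the one-point extension. For right modules, the paper's convention, the new vertex of $\begin{pmatrix}A_{n-1}&M\\0&\mathbf k\end{pmatrix}$ has projective $e_{\mathrm{new}}A_n=\begin{pmatrix}0&\mathbf k\end{pmatrix}$, which is simple. So the new simple is projective and $\Omega S_{\mathrm{new}}=0$, not $M$; you are using the column (left-module) picture. In the paper, $M$ is the simple at the sink $n$ of the $\beta$-chain, which has delooping level $0$ rather than maximal delooping level. Each extension therefore just lengthens the chain $2\to3\to\cdots\to n+1$, and $\operatorname{dell}(A_n)=1$ for every $n$. Indeed, every simple except $S_{n+2}$ is torsionless, and $\Omega S_{n+2}=S_2$ is a summand of $\Omega^2\binom{1}{2}=S_1\oplus S_2$. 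The growth of $s(A_n)$ must come entirely from $\operatorname{dell}_{A_n\otimes A_n}(S_{(n+2,n+2)})\ge n+1$, not from a race between ``$+1$ for $A_n$'' and ``$+3$ for $A_n\otimes A_n$''.

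The auxiliary claims in Step 2 are also unjustified:
\begin{itemize}
\item Syzygies of $X\otimes Y$ are iterated extensions of the tensor pieces, not stably their direct sum. So there is no reason for $\Omega^m(M\otimes M)$ to be a stable summand of $\Omega^{m+2}(S_n\otimes S_n)$.
\item The functor $(-)e$ does not preserve projectives when the new vertex is a source. In your picture it sends $P_{\mathrm{new}}\otimes P_{\mathrm{new}}$ to $M\otimes M$.
\end{itemize}

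The missing idea is a direct, non-inductive lower bound using G\'elinas's criterion: $\operatorname{dell}(S)\le n$ if and only if $\Omega^nS$ is a stable retract of $\Omega^{n+1}\Sigma^{n+1}\Omega^nS$, where $S=S_{(n+2,n+2)}$. The argument runs as follows.
\begin{enumerate}
\item Since the chain has $n-1$ arrows, $\operatorname{pd}S_{n+2}=n$, and $\operatorname{Ext}^i_{A_n}(S_{n+2},A_n)\neq0$ only for $i\in\{1,n\}$.
\item The K\"unneth formula then gives $\operatorname{Ext}^k_{A_n\otimes A_n}(S,A_n\otimes A_n)=0$ for $3\le k\le n$.
\item This vanishing makes each inclusion $\Omega^kS\hookrightarrow P$ a left projective approximation. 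Hence $\Sigma\Omega^kS\simeq\Omega^{k-1}S$ for $3\le k\le n$, and $\Sigma^{n+1}\Omega^nS$ collapses to $\Sigma^3\Omega^2S$, independently of $n$.
\item An explicit computation shows $\Sigma^3\Omega^2S$ is supported on $\{(1,1),(1,2),(2,1),(2,2)\}$. This lies inside the successor-closed set $\mathcal V_n$ of vertices with no coordinate equal to $n+2$.
\item Syzygies of modules supported in $\mathcal V_n$ stay in $\mathcal V_n$, whereas $\Omega^nS$ is supported at a vertex with a coordinate equal to $n+2$.
\end{enumerate}
So $\Omega^nS$ is not a stable retract, $\operatorname{dell}(S)>n$, and $s(A_n)\ge (n+1)-2=n-1$.

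Nothing in your restriction-to-$A_{n-1}\otimes A_{n-1}$ induction produces this. The obstruction is a band of vanishing $\operatorname{Ext}$ groups whose width grows with $n$, combined with a support argument.
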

The key in establishing these results is to carefully analyze the support of certain modules. Recall that a module $M$ over a quiver algebra is \textbf{supported} at vertex $v$ if, in its quiver representation, the vertex $v$ is assigned to a non-zero vector space. We write $v \in \operatorname{supp}(M)$ in this case.

The remainder of the paper will be organized as follows: Sections \ref{sec:counterexample} and \ref{sec:gen} will be devoted to proving Theorems \ref{thm:dell_count} and \ref{thm:dell_gen} respectively. In fact, Theorem \ref{thm:dell_gen} is a strict generalization of Theorem \ref{thm:dell_count} in the sense that substituting $n=2$ in the proof of Theorem \ref{thm:dell_gen} will recover the exact proof for Theorem \ref{thm:dell_count}. But we still separate them to make the important observations and constructions more obvious. Finally, in Section \ref{sec:last}, we will present further generalizations and open questions for further studies. 

\textbf{Acknowledgement.} The counterexample establishing Theorems \ref{thm:dell_count} and \ref{thm:dell_gen} was found by the newest publicly available (Pro Subscription) model of ChatGPT at the time of writing (\today). The exposition has been fully reviewed and rewritten by the author using his own ideas, and the author takes full responsibility for any errors and typographical errors made in this paper. Figure \ref{fig:A_tensor_A} is originally hand-drawn but converted to TikZ using the same AI and was further refurbished by the author. Figures of quiver algebras were drawn by the software \texttt{quiver} \cite{Arkor_quiver_2026}. The GAP programming language \cite{GAP4} and the package \texttt{QPA} \cite{qpa} were used to check the validity of calculations in Section \ref{sec:counterexample}. The author would like to thank his supervisor, Jeremy Rickard, for pointing out several functions in \texttt{QPA} that helped the verification.
\section{The Counterexample}
\label{sec:counterexample}
 Before introducing the counterexample for Theorem \ref{thm:dell_count}, we introduce two important propositions to calculate delooping levels. The propositions are proved by G\'elinas \cite{GELINAS2022108052}.

\begin{proposition}
    Given an $A$-module $M$, positive integer $k$:
  $\operatorname{dell}_A(M) \leq n$ if and only if $\Omega^n(M)$ is a stable retract of $\Omega^{n+1}\Sigma^{n+1}\Omega^n(M)$, where $\Sigma:\underline{\bmod A} \to \underline{\bmod A}$ is the left adjoint to the syzygy functor. In particular,  $\operatorname{dell}_A(M) = 0$ if and only if $M$ is \textit{torsionless}, that is, $M$ is a submodule of a projective module.
    \label{prop:useful_facts}
\end{proposition}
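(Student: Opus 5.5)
The plan is to use only the adjunction $\Sigma \dashv \Omega$ on $\underline{\bmod A}$, and its iterate $\Sigma^{n+1}\dashv\Omega^{n+1}$. The key point is that any stable map from $X:=\Omega^n(M)$ into an object of the form $\Omega^{n+1}(N)$ factors through the unit $\eta_X\colon X\to\Omega^{n+1}\Sigma^{n+1}X$. So if $X$ is a retract of \emph{some} $\Omega^{n+1}(N)$, it is already a retract of the universal candidate $\Omega^{n+1}\Sigma^{n+1}X$.

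First I would fix the adjunction concretely. For a module $X$, take a minimal left $\operatorname{add}(A)$-approximation $f\colon X\to P$ and set $\Sigma X=\operatorname{coker} f$. One checks that a stable map $X\to \Omega Y$ corresponds to a stable map $\Sigma X\to Y$ in both directions:
\begin{itemize}
\item[(a)] given $X\to\Omega Y\subseteq P_Y$, lift along the approximation $f$ to a map $P\to P_Y$ and pass to cokernels;
\item[(b)] given $\Sigma X\to Y$, lift through the projective cover of $Y$ and restrict to kernels.
\end{itemize}
I expect this to be the main technical obstacle. One must verify that maps factoring through projectives on one side correspond exactly to such maps on the other, so that we get a natural bijection $\underline{\operatorname{Hom}}(X,\Omega Y)\cong\underline{\operatorname{Hom}}(\Sigma X,Y)$. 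I would check this by a diagram chase using that $f$ is an $\operatorname{add}(A)$-approximation and that projectives are injective relative to the relevant sequences. Since the proposition is attributed to G\'elinas, at worst I would cite this adjunction from \cite{GELINAS2022108052}. Composing adjunctions then gives $\Sigma^{n+1}\dashv\Omega^{n+1}$, with unit $\eta_X\colon X\to\Omega^{n+1}\Sigma^{n+1}X$.

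Next comes the equivalence. If $\operatorname{dell}_A(M)\le n$, let $m\le n$ be such that $\Omega^m(M)$ is a stable summand of $\Omega^{m+1}(N)$. Applying the additive functor $\Omega^{n-m}$, which preserves direct summands, shows that $X$ is a stable summand of $\Omega^{n+1}(N)$. So there are stable maps $s\colon X\to\Omega^{n+1}N$ and $r\colon \Omega^{n+1}N\to X$ with $rs=1_X$. By adjunction, $s=\Omega^{n+1}(s^\sharp)\circ\eta_X$ for the adjoint map $s^\sharp\colon\Sigma^{n+1}X\to N$. Hence $r\circ\Omega^{n+1}(s^\sharp)$ is a retraction of $\eta_X$, so $X$ is a stable retract of $\Omega^{n+1}\Sigma^{n+1}X$. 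Conversely, if $X$ is such a retract, take $N=\Sigma^{n+1}\Omega^n(M)$. Here a stable retract is a stable summand, since $\underline{\bmod A}$ is Krull--Schmidt and idempotents split; if needed, I would add projective summands to realise this by an honest module decomposition. Then $\operatorname{dell}_A(M)\le n$.

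Finally, for $n=0$: $\operatorname{dell}_A(M)=0$ means $M\oplus P\cong \Omega(N)\oplus Q$ for some projectives $P,Q$. Since $\Omega(N)\oplus Q$ embeds in a projective module, so does its summand $M$, and $M$ is torsionless. Conversely, suppose $M\subseteq P$ with $P$ projective. The short exact sequence $0\to M\to P\to P/M\to 0$ and Schanuel's lemma give $M\cong\Omega(P/M)$ up to projective summands. So $M$ is a stable summand of $\Omega^1(P/M)$ and $\operatorname{dell}_A(M)=0$.
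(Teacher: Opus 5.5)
Your proof is correct. The paper gives no argument of its own here; it only cites G\'elinas \cite{GELINAS2022108052}. What you write is the standard self-contained proof behind that citation. The unit $\eta_X\colon X\to\Omega^{n+1}\Sigma^{n+1}X$ of $\Sigma^{n+1}\dashv\Omega^{n+1}$ is the universal stable map from $X=\Omega^n(M)$ into an $(n+1)$-st syzygy. Hence $X$ is a stable summand of some $\Omega^{n+1}(N)$ exactly when $\eta_X$ splits in $\underline{\bmod A}$, and retracts are summands there because idempotents split. The citation is economical; your route has the advantage of making explicit that the adjunction $\underline{\operatorname{Hom}}(X,\Omega Y)\cong\underline{\operatorname{Hom}}(\Sigma X,Y)$ is all that is needed. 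Your direct Schanuel argument for $n=0$ is also fine.

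Two small points of precision:
\begin{enumerate}
\item Your sketch of the adjunction does go through. It uses only the approximation property of $f$ and right exactness of $X\xrightarrow{f}P\to\Sigma X\to 0$. This matters because $f$ fails to be injective exactly when $X$ is not torsionless. So in (b) you should not literally restrict to a kernel. Instead, compose the lifted map $P\to P_Y$ with $f$ and observe that the composite lands in $\ker(P_Y\to Y)=\Omega Y$. Well-definedness modulo projectives and mutual inverseness then follow by the diagram chase you describe.
\item For $n=0$, being a stable summand of $\Omega(N)$ means that $M$ is a direct summand of $\Omega(N)\oplus Q$ for some projective $Q$. It does not mean $M\oplus P\cong\Omega(N)\oplus Q$. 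Your embedding argument only uses the former, so nothing breaks.
\end{enumerate}
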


To compute $\Sigma M$ for an $A$-module $M$, one can take a left projective approximation $f: M\to P$ of $M$ and take its cokernel. Recall that $f$ is a left projective approximation if for any $g: M \to Q$ where $Q$ is a projective $A$-module, there is $h: P \to Q$ such that $hf = g$. The GAP package \texttt{QPA} also supports $\Sigma^n$ of a module \texttt{M} by calling \texttt{Transpose(NthSyzygy(Transpose(M),n))}.

Now we present our counterexample to Theorem \ref{thm:dell_count}. Let $A$ be an 8-dimensional quiver algebra $kQ/I$ defined by:
\[Q: \begin{tikzcd}
	1 && \\
	& 2 & 3 \\
	4
	\arrow["\delta", from=1-1, to=1-1, loop, in=150, out=210, distance=5mm]
	\arrow["\alpha", from=1-1, to=2-2]
	\arrow["\beta", from=2-2, to=2-3]
	\arrow["\gamma"', from=3-1, to=2-2]
\end{tikzcd} \quad \text{ with } I = \langle\alpha\beta, \gamma\beta, \delta\alpha, \delta^2\rangle=(\operatorname{rad} kQ)^2.\]
Note that path multiplication goes from left to right. We will use $S_i$ to denote the simple module supported at vertex $i$ from now on. Now we are ready to prove Theorem \ref{thm:dell_count}.
\begin{proof}
To find the delooping level of $A$, we notice that the Loewy structures of the four indecomposable projective modules of $A$, denoted by $P_1, P_2, P_3, P_4$, can be written as follows.
\[\begin{matrix}&1& \\1 &&2\end{matrix}, \quad\begin{matrix}
    2 \\3
\end{matrix},\quad 3,\quad \begin{matrix}
    4 \\2
\end{matrix}\]
, where $i$ is used as an abbreviation for $S_i$. Hence $S_1, S_2, S_3$ are submodules of projectives, and hence their delooping level is 0. $S_4$ is not torsionless, but observe that $\Omega(S_4) = S_2$ and $\Omega^2\binom{1}{2} = S_1\oplus S_2$, hence $\Omega(S_4)$ is a (stable) summand of $\Omega^2\binom{1}{2}$, hence \[\operatorname{dell}_A(S_4) = \operatorname{dell}(A) = 1.\]

Now we move on to analyse $A\otimes_{\mathbf k} A$. It suffices to show $\operatorname{dell}_{A\otimes_{\mathbf{k}} A}(S_4\otimes_{\mathbf{k}} S_4) > 2$.  $A\otimes_{\mathbf k} A$ is a $8\times 8 = 64$-dimension algebra. The simple modules of $A \otimes_{\mathbf{k}} A$ are $S_i \otimes_{\mathbf{k}} S_j$, and will be denoted by by $S_{ij}$. We can realize it as a quiver algebra $k\widetilde{Q}/\widetilde{I}$ as follows. The underlying quiver $\widetilde{Q}$ is the Cartesian product of two copies of $Q$. This is shown in Figure \ref{fig:A_tensor_A}. $\widetilde{Q}$ has 16 vertices, labelled by $(i,j)$ where $1 \leq i, j \leq 4$. Fixing $i$, there are arrows $\alpha^i, \beta^i, \gamma^i, \delta^i$ (labelled by \textcolor{red}{red}). Fixing $j$, we have arrows $\alpha_j, \beta_j, \gamma_j, \delta_j$ (labelled by black), the total of $32$ arrows contains a copy of $Q$ in each fixed ``coordinate''. 
\begin{figure}[h!]
    \centering
    \begin{tikzpicture}[
  x=\ColumnSep, y=\RowSep,
  vertex/.style={
    draw=black, fill=white, rounded corners=1pt,
    line width=0.55pt, minimum width=7.5mm,
    minimum height=5.8mm, inner sep=1pt,
    outer sep=0.3pt
  },
  arrow/.style={
    -{Stealth[length=2mm,width=1.35mm]},
    line width=0.7pt, line cap=round,
    shorten <=0.4pt, shorten >=0.5pt
  },
  crossing/.style={
    preaction={draw=white,arrows=-,line width=2.7pt}
  },
  black arrow/.style={arrow,draw=black},
  red arrow/.style={arrow,draw=red,crossing},
  black label/.style={
    font=\normalsize,text=black,fill=white,inner sep=0.8pt
  },
  red label/.style={black label,text=red}, scale = 0.9
]

\foreach \id/\x/\y in {
  11/0/0,       12/0/1,       13/0/2,
  21/1/0,       22/1/1,       23/1/2,
  31/3/0,       32/3/1,       33/3/2,
  41/2/0.5,     42/2/1,       43/2/1.5,
  14/-0.5/1.5,  24/0.5/1.5,   34/1.5/1.5,
  44/0.5/0.5
}{
  \node[vertex] (v\id) at (\x,\y) {$\id$};
}

\begin{pgfonlayer}{arrows}
\draw[black arrow] (v14) --
  node[black label,pos=0.22,above=2pt] {$\alpha_4$} (v24);
\draw[black arrow] (v24) --
  node[black label,pos=0.24,above=2pt] {$\beta_4$} (v34);
\draw[black arrow] (v11) --
  node[black label,above=2pt] {$\alpha_1$} (v21);
\draw[black arrow] (v12) --
  node[black label,pos=0.28,above=2pt] {$\alpha_2$} (v22);
\draw[black arrow] (v13) --
  node[black label,above=2pt] {$\alpha_3$} (v23);
\draw[black arrow] (v21) --
  node[black label,pos=0.73,above=2pt] {$\beta_1$} (v31);
\draw[black arrow] (v41) --
  node[black label,above left=1pt] {$\gamma_1$} (v21);
\draw[black arrow] (v42) --
  node[black label,above=2pt] {$\gamma_2$} (v22);
\draw[black arrow] (v22) to[bend right = 30]
  node[black label, pos=0.8] {$\beta_2$} (v32);
\draw[black arrow] (v43) --
  node[black label,above right=1pt] {$\gamma_3$} (v23);
\draw[black arrow] (v23) --
  node[black label,above=2pt] {$\beta_3$} (v33);
\draw[black arrow,crossing] (v44) --
  node[black label,pos=0.2,left=2pt] {$\gamma_4$} (v24);
\draw[red arrow] (v11) --
  node[red label,left=2pt] {$\alpha^1$} (v12);
\draw[red arrow] (v21) --
  node[red label,pos=0.4,right=2pt] {$\alpha^2$} (v22);
\draw[red arrow] (v31) --
  node[red label,right=2pt] {$\alpha^3$} (v32);
\draw[red arrow] (v41) --
  node[red label,right=2pt] {$\alpha^4$} (v42);
\draw[red arrow] (v12) --
  node[red label,pos=0.76,left=2pt] {$\beta^1$} (v13);
\draw[red arrow] (v22) --
  node[red label,pos=0.76,right=2pt] {$\beta^2$} (v23);
\draw[red arrow] (v32) --
  node[red label,right=2pt] {$\beta^3$} (v33);
\draw[red arrow] (v42) --
  node[red label,right=2pt, pos = 0.4] {$\beta^4$} (v43);
\foreach \from/\to/\i in {14/12/1,24/22/2,34/32/3,44/42/4}{
  \draw[red arrow] (v\from) --
    node[red label,above right=1pt] {$\gamma^{\i}$} (v\to);
}
\foreach \v/\i in {11/1,12/2,13/3,14/4}{
  \draw[black arrow]
    ([yshift=\LoopMouth]v\v.west)
    .. controls
       ([xshift=-\LoopReach,yshift=\LoopFlare]v\v.west)
       and
       ([xshift=-\LoopReach,yshift=-\LoopFlare]v\v.west)
    .. node[black label,pos=0.5,left=2pt] {$\delta_{\i}$}
       ([yshift=-\LoopMouth]v\v.west);
}

\foreach \v/\i in {11/1,21/2,31/3,41/4}{
  \draw[arrow,draw=red]
    ([xshift=-\LoopMouth]v\v.south)
    .. controls
       ([xshift=-\LoopFlare,yshift=-\LoopReach]v\v.south)
       and
       ([xshift=\LoopFlare,yshift=-\LoopReach]v\v.south)
    .. node[red label,pos=0.5,below=2pt] {$\delta^{\i}$}
       ([xshift=\LoopMouth]v\v.south);
}

\end{pgfonlayer}
\end{tikzpicture}
    \caption{A figure of the quiver $\widetilde{Q}$. We use the shorthand $ij$ to denote the vertex $(i, j).$}
    \label{fig:A_tensor_A}
\end{figure}

The ideal $\widetilde{I}$ is generated by all the length-two paths of the same color and any commutative square of the arrows consisting of one color after another. In particular, note that $\operatorname{rad}^3 A=0$ in this case, because any non-zero length-two path can be identified to end with the arrow of any color by the commutative requirement, and hence composing with another arrow with any color will annihilate the element.

We only need to compute $\Omega^3\Sigma^3\Omega^2(S_{44})$\footnote{Note that we will suppress the difference between taking syzygies in $A$ and $A \otimes_{\mathbf{k}} A$ and denote both functors by $\Omega$. The reader should be able to distinguish them by context.} and check that $\Omega^2(S_{44})$ is a stable summand of it. One can directly calculate it by hand, but we outline a few important features in this calculation below.

Computing the first three terms of the left projective approximation gives: 
\begin{equation}\Omega^2(S_{44}) = \begin{matrix}
    43 &&22&&34\\&23&&32
\end{matrix} \to P_{11}\oplus P_{42} \oplus P_{24} \to P_{44}\oplus P_{11}^2\stackrel{q}\to P_{11}^m\to \cdots.
\label{eq:LProj_approx}
\end{equation}
In particular, note that $\Sigma^3\Omega^2(S_{44})=\Sigma^3\left(\begin{matrix}
    43 &&22&&34\\&23&&32
\end{matrix}\right) = \operatorname{coker} q$ is only supported on the 4 vertices $11, 12, 21, 22$. Moreover, the third syzygy module $\Omega^3\Sigma^3\Omega^2(S_{44})$ would \textbf{not} be supported on $14, 24, 34, 44, 41, 42, 43, 44$  Hence $\Omega^2(S_{44})$ is not a stable retract of $\Omega^3\Sigma^3\Omega^2(S_{44})$, hence $\operatorname{dell}(S_{44})>2$, and the theorem is proved.
\end{proof}
In fact, the delooping level of $A \otimes_{\mathbf{k}}A$ is $3$. This can be checked by \texttt{QPA}.
\section{Generalization of the Counterexample}
\label{sec:gen}
In the proof of Theorem \ref{thm:dell_count}, it is crucial that the support of $U:= \Sigma^3\Omega^2(S_{44})$ is supported in the 4 vertices $11, 12, 21, 22,$ hence the syzygies of $U$ would not be supported on the vertex $44$. We generalize this observation to arbitrary quiver algebras.
\begin{proposition}
    Let $A = kQ/I$ be a quiver algebra. Let $\mathcal{V} \subseteq Q_0$ be a successor-closed subset of vertices. That is, any arrows that start in $\mathcal{V}$ must end in $\mathcal{V}$. If $\operatorname{supp}(M) \subseteq \mathcal{V}$, then $\operatorname{supp}(\Omega^t(M))  \subseteq \mathcal{V}$ for any non-negative integer $t$.
    \label{prop:succ_syz}
\end{proposition}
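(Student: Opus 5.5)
By induction on $t$, it suffices to treat $t=1$: if $\operatorname{supp}(M)\subseteq\mathcal{V}$ then $\operatorname{supp}(\Omega(M))\subseteq\mathcal{V}$. The case $t=0$ is the hypothesis. Applying the $t=1$ statement to $\Omega^{t-1}(M)$ gives the statement for $t$.

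For $t=1$, recall that $\Omega(M)$ is the kernel of the projective cover $P(M)\to M$, where $P(M)=\bigoplus_{v} P_v^{m_v}$ and $m_v$ is the multiplicity of $S_v$ in the top $M/\operatorname{rad}M$. Every composition factor of the top of $M$ is a composition factor of $M$. So $m_v\neq 0$ forces $v\in\operatorname{supp}(M)\subseteq\mathcal{V}$, and $P(M)$ is a direct sum of copies of $P_v$ with $v\in\mathcal{V}$. Since $\Omega(M)$ is a submodule of $P(M)$, and support is just the set of vertices $w$ with $Me_w\neq 0$, which is inherited by submodules, we get $\operatorname{supp}(\Omega(M))\subseteq\operatorname{supp}(P(M))$. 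It therefore remains to show $\operatorname{supp}(P_v)\subseteq\mathcal{V}$ for each $v\in\mathcal{V}$.

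With right modules and the left-to-right path convention of the paper, $P_v=e_vA$ is spanned by the residue classes of paths starting at $v$. Moreover, $P_ve_w=e_vAe_w$ is spanned by classes of paths from $v$ to $w$. If $P_ve_w\neq0$, there is a path $v=v_0\to v_1\to\cdots\to v_r=w$ in $Q$. Successor-closedness of $\mathcal{V}$, applied arrow by arrow, gives $v_i\in\mathcal{V}$ for all $i$, and so $w\in\mathcal{V}$.

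The only real subtlety is this orientation check. One must confirm that, under the paper's conventions (right modules, composition left to right), the indecomposable projective at $v$ is supported on the \emph{successors} of $v$ rather than its predecessors. This is what makes ``successor-closed'' the right hypothesis. One can verify it against the example: $P_4$ has structure $\begin{smallmatrix}4\\2\end{smallmatrix}$ and $\gamma\colon 4\to 2$, consistent with this orientation. Apart from this, the argument is routine.
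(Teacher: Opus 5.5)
Your proposal is correct and follows the paper's argument: the projective cover of $M$ is supported in $\mathcal{V}$ by successor-closedness, so its submodule $\Omega(M)$ is too, and then you induct on $t$. You supply details the paper leaves implicit, notably the orientation check that $e_vA$ is supported exactly on the successors of $v$ under the right-module, left-to-right convention.
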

\begin{proof}
    The projective cover of $M$, $P_M$ would also be supported at $\mathcal{V}$ since $\mathcal{V}$ is successor-closed; hence $\Omega(M)$ is supported at $\mathcal{V}$. Then one can induct.
\end{proof}
\begin{example}
    In our counterexample $A$ in Section \ref{sec:counterexample}, $U:= \Sigma^3\Omega^2(S_{44})$ is supported on the successor-closed set:
    \[\mathcal{V}_2 := \{(i, j): 1\leq i \leq 3, 1\leq j \leq 3\}.\]
    Hence so is $\Omega^3(U)$, which lead us to conclude that $\operatorname{dell}(S_{44}) >2.$
\end{example}
This can be readily generalized. For integer $n \geq 2$, define a family of quiver algebras $A_n := kQ_n/I_n$ by:
\[Q_n: \quad \begin{tikzcd}
	1 &&&& \\
	& 2 & 3 & \cdots & {n+1} \\
	{n+2}
	\arrow["\delta", from=1-1, to=1-1, loop, in=150, out=210, distance=5mm]
	\arrow["\alpha", from=1-1, to=2-2]
	\arrow["{\beta_1}", from=2-2, to=2-3]
	\arrow["{\beta_2}", from=2-3, to=2-4]
	\arrow["{\beta_{n-1}}", from=2-4, to=2-5]
	\arrow["\gamma"', from=3-1, to=2-2]
\end{tikzcd} \text{ with } I_n = (\operatorname{rad} kQ_n)^2.\]

This is consistent with our description in Theorem \ref{thm:dell_gen}. That is, recall we defined a family of algebras $\{A_n\}_{n \geq 2}$ by
    \[A_2 = A, \qquad A_n = \begin{pmatrix}
    A_{n-1} & M \\ 0 &\mathbf{k}
\end{pmatrix}\]
where $M$ will now be the simple left $A_{n-1}$-module associated to the vertex $n$. We are now ready to provide the proof of Theorem \ref{thm:dell_gen}.
\begin{proof}
     The indecomposable projective modules of $A_n$ are: \[ \begin{matrix}&1& \\1 &&2\end{matrix}, \qquad\begin{matrix}
    2 \\3
\end{matrix}, \qquad\begin{matrix}
    3 \\4
\end{matrix}, \qquad \cdots \begin{matrix}
    n \\(n+1)
\end{matrix},\qquad (n+1),\qquad \begin{matrix}
    (n+2) \\2
\end{matrix}
\]
    In particular, the simple modules $S_1, S_2, \cdots, S_{n+1}$ are torsionless modules. And similar to Theorem \ref{thm:dell_count}, $\Omega(S_{n+2})$ is a (stable) summand of $\Omega^2\binom{1}{2}$, hence $\operatorname{dell}(A_n) = \operatorname{dell}_A(S_{n+2})= 1$.

    We now analyze $A_n \otimes_{\mathbf{k}} A_n $ as a quiver algebra $ \mathbf{k}\widetilde{Q}_n/\widetilde{I_n}$. Denote the simple module $S_{i}\otimes_{\mathbf{k}} S_{j}$ by $S_{(i, j)}$. Similar to Theorem \ref{thm:dell_count}, we claim that the simple $S := S_{(n+2, n+2)}$ has delooping level $n+1$. It suffices to show that $\Sigma^{n+1}\Omega^{n}(S)$ is supported in vertices ${(1,1)}, {(1, 2)}, {(2, 1)}, {(2, 2)}$. If this is true, then by Proposition \ref{prop:succ_syz},  the successor-closed set
    \[\mathcal{V}_n := \{(i, j) : 1\leq i \leq n+1, 1\leq j\leq n+1\}\subseteq\widetilde{Q}_0\]
    would imply $\Omega^{n+1}\Sigma^{n+1}\Omega^{n}(S)$ was not supported at the vertices $(n+2, 2)$, but it is not difficult to see that $\Omega^n(S)$ is supported at a vertex with label $n+2$ either at first or second coordinate. Hence $\Omega^n(S)$ would not be a stable summand of $\Omega^{n+1}\Sigma^{n+1}\Omega^{n}(S)$, 
    therefore $\operatorname{dell}_{A\otimes_{\mathbf{k}}
    A}(S) > n$ as required.

    We now compute $\Sigma^{n+1}\Omega^{n}(S)$. First note that in $A$, there is a projective resolution:
    \[0 \to P_{n+1}\to P_n \to \cdots \to P_2\to P_{n+2}\to S_{n+2}\to 0.\]
   Computing Ext groups, when $i\geq 1$, we have:
    \[\operatorname{Ext}^i_A(S_{n+2}, A) = \begin{cases}
        \mathbf{k} &\text{ if } i=1, n\\  0 &\text{otherwise}
    \end{cases}.\]
    We now use the K\"unneth formula for Ext groups. We use one of the versions formulated by Gao, Zhang and Zhu \cite[Theorem A.1]{gao_weakly_2026}. 
    \[\operatorname{Ext}^k_{A\otimes_{\mathbf{k}} A }(S, A\otimes_{\mathbf{k}} A ) = \bigoplus_{i, j:i+j=k}\operatorname{Ext}^i_A(S_{n+2}, A)\otimes_{\mathbf{k}} \operatorname{Ext}^j_A(S_{n+2}, A),\]
    In particular, \begin{equation}
        \operatorname{Ext}^1_{A\otimes_{\mathbf{k}} A }\left(\Omega^{k-1}(S), A\otimes_{\mathbf{k}} A\right) =\operatorname{Ext}^k_{A\otimes_{\mathbf{k}} A }\left(S, A\otimes_{\mathbf{k}} A\right) = 0
        \label{eq:ext_vanish}
    \end{equation}  for $3 \leq k \leq n$.
    Now, let $Q$ be a projective $A\otimes_{\mathbf{k}} A$-module. Consider the short exact sequence \[0 \to \Omega^k(S) \to P\to \Omega^{k-1}(S)\to 0\] and applying the functor $\operatorname{Hom}_{A\otimes_{\mathbf{k}} A}(-, Q)$ one has that \[\operatorname{Hom}_{A\otimes_{\mathbf{k}} A}(P, Q) \to \operatorname{Hom}_{A\otimes_{\mathbf{k}} A}(\Omega^k(S), Q) \quad \]
is surjective due to the vanishing of $\operatorname{Ext}^1$ in \eqref{eq:ext_vanish}. Hence $\Omega^k(S)  \to P$ is a left projective approximation, hence $\Sigma\Omega^k(S) =\Omega^{k-1}(S)$. Repetitive application of the formula gives:
\[\Sigma^{n+1}\Omega^n(S) = \Sigma^{n}\Omega^{n-1}(S) = \Sigma^{n-1}\Omega^{n-2}(S) = \cdots = \Sigma^3\Omega^2(S).\]
Now we can compute the indecomposable projectives of $A \otimes_{\mathbf{k}} A$ and verify that:
\[\Omega^2(S) = \begin{matrix}
    (n+2, 3) &&
    (2,2)&&(3,n+2)\\&(2,3)&&(3,2)
\end{matrix}.\]
Then the remainder of the computation is exactly the same as \eqref{eq:LProj_approx} in Theorem \ref{thm:dell_count}. That is, we have a left projective approximation 
\[\Omega^2(S)  \to P_{(1,1)}\oplus P_{(n+2,2)} \oplus P_{(2,n+2)} \to P_{(n+2,n+2)}\oplus P_{11}^2\stackrel{q_n}\to P_{11}^m\to \cdots.\]
Hence $\operatorname{coker}(q_n) = \Sigma^3\Omega^2(S) = \Sigma^{n+1}\Omega^n(S)$ is still supported in vertices ${(1,1)}, {(1, 2)}, {(2, 1)}, {(2, 2)}$ as required.
\end{proof}
\section{Future Work and Open Questions}
\label{sec:last}
After delooping levels were introduced, Guo and Igusa generalized this idea in \cite{guo_derived_2025}, creating more invariants that would bound the big finitistic dimension of the opposite algebra. In particular, they introduce the $k$-delooping level $k$-$\operatorname{dell}$, and the derived delooping level $\operatorname{ddell}$. 
\begin{definition}
     Let $k$ be a positive integer. The \textbf{$k$-delooping level} of $M$ is the least integer $n$ such that there exists an $A$-module $N$ with $\Omega^n(M)$ is a stable summand of $\Omega^{n+k}(N)$. We write $k\operatorname{-dell}_A(M) = n$. 
\end{definition}
\begin{definition}
    The \textbf{derived delooping level} of an $A$-module $M$, denoted by $\operatorname{ddell}(M)$, is:
    \begin{align*}
\operatorname{ddell}_A(M)& =\inf \{m \in \mathbb{N} \mid \exists n \leq m \text{ and an exact sequence in } \bmod A  \text{ of the form } \\
& 0 \rightarrow C_n \rightarrow C_{n-1} \rightarrow \cdots \rightarrow C_1 \rightarrow C_0 \rightarrow M \rightarrow 0 \\&\text { where } (i+1) \operatorname{-dell} C_i \leq m-i, i=0,1, \ldots, n\}
\end{align*}
\end{definition}
Similarly, we define for an algebra $A$ its $k$-delooping and derived delooping level by: \[
k\operatorname{-dell}(A) := \sup_{M \text{ is a simple module}} k\operatorname{-dell}_A(M),
\]
\[\operatorname{ddell}(A) := \sup_{M \text{ is a simple module}} \operatorname{ddell}_A(M).\]
Guo and Igusa showed that for any positive integer $k$:
\[k\operatorname{-dell}(A) \geq \operatorname{dell}(A) \geq \operatorname{ddell}(A) \geq \operatorname{FinDim}(A^{op}).\]

The difference between those invariants has been investigated in previous works. For example, Kershaw and Rickard \cite{kershaw_finite_2024} had an example where $\operatorname{dell}(A) =\infty$ but $\operatorname{FinDim}(A^{op})=1$, and it was later shown by Guo and Igusa \cite{guo_derived_2025} that $\operatorname{ddell}(A) =1$ as well. Barrios, Lanzilotta and Mata later proposed a family of algebras with two positive integer parameters $A_{n, k}$ (with $n > k$) where $\operatorname{dell}(A_{n,k})$ only depends on $n$ and $\operatorname{ddell}(A_{n, k}) = \operatorname{FinDim}(A_{n, k}^{op})$ only depends on $k$. Very recently, Gao, Liu and Xu \cite{gao2026deriveddeloopinglevelsonepoint} found an algebra $B$ where $\operatorname{ddell}(B)-\operatorname{FinDim}(B^{op})=1$ and .

Similar to Theorem \ref{thm:dell_gen} we can also define for a finite-dimensional algebra $R$:
\[s'(R):= \operatorname{ddell}(R\otimes_{\mathbf{k}}R) -2\operatorname{ddell}(R)\]

Guo \cite[Corollary 4.3, 4.4]{guo_derived_2025} established $s'(R) = 0$, and other results regarding derived delooping level and tensor product if $R$ possessed some desired homological properties. One can ask whether $s'(R) = 0$ for all finite-dimensional algebras $R$. The author suspects it has a negative answer, where the example $B$ in \cite{gao2026deriveddeloopinglevelsonepoint} might give $s'(B)=1$. 

We will end with a question that is analogous to Theorem \ref{thm:dell_gen}.

\begin{question}
    Is there a family of algebras $\{B_n\}_{n\geq 1}$ such that $\sup_{n\geq 1} s'(B_n) =\infty$?
\end{question}

\printbibliography
\end{document}